\theoremstyle{plain}
\newtheorem{thm}{Theorem}[section]
\newtheorem{lem}[thm]{Lemma}
\newtheorem{cor}[thm]{Corollary}
\newtheorem{prop}[thm]{Proposition}
\theoremstyle{definition}
\newtheorem{rem}[thm]{Remark}
\newtheorem{defn}[thm]{Definition}
\numberwithin{equation}{section}
\newcommand{\R}{\mathbb{R}}
\newcommand{\C}{\mathbb{C}}
\newcommand{\pa}{\partial}
\newcounter{kotaeflg}
\newcommand{\kotae}[1]{
\ifodd \arabic{kotaeflg}
#1
\fi
}
\begin{document}




\title{Unconditional well-posedness for the nonlinear Schr\"odinger equation in Bessel potential spaces}
\author{Ryosuke Hyakuna \\
  {\normalsize Polytechnic University of Japan. }\\
{\normalsize Kodaira-city, 169-8555, Tokyo, Japan}\\
{\normalsize email: 107r107r@gmail.com}}
%
%
\maketitle
\abstract{The Cauchy problem for the nonlinear Schr\"odinger equation is called unconditionally well posed in a data space $E$ if it is well posed in the usual sense and the solution is unique in the space $C([0,T]; E)$.  In this paper, this notion of unconditional well-posedness is redefined so that it covers $L^p$-based Sobolev spaces as data space $E$ and it is equivalent to the usual one when $E$ is
an $L^2$-based Sobolev space $H^s$. Next, based on this definition, it is shown that the Cauchy problem for the 1D cubic NLS is unconditionally well posed in Bessel potential spaces $H^s_p$ for
$4/3<p\le 2$ under certain regularity assumptions on $s$. }

\noindent{\bf Mathematics Subject Classification.} \, 35Q55.\\
{\bf Keywords }:  Nonlinear Schr\''dinger equation, unconditional well-posedness, $L^p$-space, Bessel potential space

\section{Introduction}
We consider the Cauchy problem for the one dimensional cubic nonlinear Schr\"odinger equation
\begin{equation}
iu_t+u_{xx}+|u|^2u =0, \quad u(0)=\phi\in E,\label{NLS}
\end{equation}
where $E$ is a Banach space of complex valued functions on $\R$.  Recall that Cauchy problem (\ref{NLS}) is
called locally well posed in the data space $E$ if, for any $\phi \in E$ there are a $T=T(\phi )>0$ and a unique solution
$u$ of (\ref{NLS}) in the space $C([0,T]; E)\cap Y_T\triangleq Z_T$, where $Y_T$ is a space of functions
on $[0,T]\times \R$.  For example, (\ref{NLS}) is locally (and globally) well posed in $H^s$ for $s\ge 0$. The space $Y_T$ is called an auxiliary space and in some cases the well-posedness can be shown without 
using this space.  In fact, if $s>1/2$ the solution can be established directly in a closed subset of $L^{\infty}([0,T] ;H^s)$ via the 
standard fixed point argument and the local well-posedness holds with $Z_T=C([0,T] ; H^s)$.  This is possible because the space $H^s$ forms an algebra if $s>1/2$.  
On the other hand, when $s<1/2$, the solution of (\ref{NLS}) is usually constructed in the space $L^{\infty}([0,T]; H^s) \cap L^q([0,T] ; H_r^s)$ for suitable choices of $(q,r)$ with the aid of 
the Strichartz estimate (see e.g. \cite{Caz}).  In this case, $Y_T=L^q([0,T] ; H_r^s)$, and the uniquness assertion is shown in this space, not in $C([0,T] ;H^s)$. Therefore, 
it is natural to ask if the well-posedness holds with $Z_T=C([0,T] ;E)$ for $s<1/2$.  
This problem of whether or not the auxiliary condition is removable was first proposed by Kato \cite{KatoHs}.  He said (\ref{NLS}) is \textit{unconditionally well posed} in $E$ if 
it is well posed with $Z_T=C([0,T] ; E)$, or equivalently, the uniqueness of solutions holds in the space $C([0,T] ; E)$ in addition to its well-posedness.  In \cite{KatoHs} he showed that
the unconditional well-posedness in $H^s$ can be pushed from the trivial case $s>1/2$ down to $s=1/6$.  It is known that this is the minimal Sobolev regularity
for the unconditional well-posedness of the 1D cubic NLS (\ref{NLS}).  Thus, the fact that $u\in L^q([0,T] ; H_r^s)$ is simply an additional regularity property of the solution which is removable for $s\ge 1/6$, while
it is required to ensure the well-posedness in $H^s$ for $s<1/6$. Since the pioneering work \cite{KatoHs}, the problem of the unconditional well-posedness for nonlinear Schr\"odinger equations
and other nonlinear dispersive equations has been extensively studied.  We refer to \cite{Kishimoto} and references therein for earliear results in this direction.

The aim of the paper is to extend the unconditional well-posedness results for the $L^2$-based Sobolev spaces $H^s$ to the Bessel potential spaces $H^s_p$.  One might think discussing such a problem is meaningless, since
if $p\neq 2$ the Schr\"odinger equations are not well posed in $H^s_p$ even in the linear case (see \cite{Brenner,Hormander}).  In particular, we cannot expect the persistence property $u(t) \in H^s_p$ for data $u(0)\in H^s_p$ unless $p=2$ let alone the uniqueness issue in the space $C([0,T] ; H^s_p)$.  Nevertheless, we stress that the problem of unconditional well-posedness can be generalized to the $L^p$-setting in a natural manner.  Our idea here is motivated by Zhou.  In \cite{Zhou} he introduced the ``twisted" variable $v(t):=e^{-it\pa_x^2}u(t)$ and rewrite the integral equation corresponding to (\ref{NLS}) as
\begin{equation}
v(t)=\phi+i\int^t_0 e^{-i\tau \pa_x^2} \left[ (e^{i\tau \pa_x^2} v(\tau))(e^{i\tau \pa_x^2} v(\tau)) (\overline{ e^{i\tau \pa_x^2} v(\tau) }) \right] d\tau. \label{INT}
\end{equation}
Then he showed that a local solution $v$ of (\ref{INT}) exists in the space $C([0,T]\, ;\, L^p)$ for $\phi
\in L^p,\,1<p<2$.  This result implies the existence of a solution of the original 
Cauchy problem (\ref{NLS}) such that $e^{-it\pa_x^2} u(t) \in C([0,T]\,;L^p)$.  Thus, a suitable space 
to discuss the problem of the unconditional uniqueness in the $L^p$-space is 
\begin{equation}
\{ u\,|\, e^{-it\pa_x^2} u(t) \in C([0,T]\, ; L^p)\}.
\end{equation}
In fact, it is easy to see that if $p=2$, this space coincides with $C([0,T];L^2)$.  In
this paper, we formulate the notion of the unconditional well-posedness and uniqueness
in the $L^p$, and more generally, the Bessel potential spaces $H^s_p$ based on this idea. Then, we show that
(\ref{NLS}) is unconditionally locally well posed in $H^s_p$ under some assumptions on the Sobolev regularity.
Finally, for the unconditional well-posedenss in other non-$L^2$-based spaces, we refer to
\cite{OW}, where they discuss the unconditional uniqueness of the periodic NLS in the Fourier-Lebesgue spaces $\mathcal{F}L^p$ as data space.  Note also that
for the Fourier-Lebesgue spaces, one does not need to consider the twisted variable as in the case of $H^s_p$, since the solution is expected to
have the usual persistence property $u\in C([0,T] ; \mathcal{F}L^p)$.  See \cite{Grunrock}.

\medskip

\textbf{Notation}. \, For $1\le p \le \infty$ and $I \subset \R$, $\|f\|_{L^p(I)}$ denotes the usual $L^p$-norm.  \,$p'$ is the conjugate
exponent of $p$: $1/p+1/p'=1$.  The Fourier transform of $\phi$ is denoted by $\hat{\phi}$.  The Bessel potential spaces are defined by
\begin{equation*}
H^s_p (\R)
\triangleq \{\phi \in\mathscr{S}'(\R)\,|\, \langle D \rangle^s \phi \in L^p (\R)\},
\end{equation*}
where $\widehat{\langle D \rangle^s \phi} (\xi)=(1+|\xi|^2)^{s/2} \hat{\phi}(\xi)$.  In particular, we 
write $H^s=H^s_2$ as usual.  For simplicity we often write $L^p, H^s_p$ to denote
$L^p(\R), H^s_p(\R)$ respectively.  For a function $u:I\times \R \to \C$, we set
\begin{equation*}
\|u\|_{L^q(I; H^s_p )} \triangleq \left(\int_I \left\| u (t, \cdot) \right\|^q_{H^s_p} dt \right)^{\frac{1}{q}}.
\end{equation*}

\medskip

We define function spaces $\mathfrak{C}( I ; E)$ and $\|u\|_{\mathfrak{L}^{\infty} ( I ;E)}$ to introduce a concept of
well-posedness of (\ref{NLS}) in $L^p$.

\begin{defn} 
Let $E\subset \mathcal{S}'(\R)$ and $I\subset \R$. The space $\mathfrak{C}(I; E)$ is defined by
\begin{equation*}
\mathfrak{C}( I ; E) \triangleq \{ u=e^{it\pa_x^2} v(t)\,|  \, v \in  C( I ;E) \}.
\end{equation*}
Similarly, the space $\mathfrak{L}^{\infty} (I ; E)$ is defined by
\begin{equation*}
\mathfrak{L}^{\infty}( I ; E) \triangleq \{ u=e^{it\pa_x^2} v(t)\,|  \, v\in  L^{\infty}( I ;E) \}.
\end{equation*}
equipped with the norm
\begin{equation*}
\|u\|_{\mathfrak{L}^{\infty} ( I ;E)} \triangleq \| e^{-it\pa_x^2} u \|_{L^{\infty}(I ; E)}.
\end{equation*}

\end{defn}

We first give the precise definition of ``twisted" local well-posedness for (\ref{NLS}) which was essentially introduced by Zhou.

For a normed space $E$, $\mathcal{B}_E(r)$ denotes the closed ball with the radius $r>0$, namely
$\mathcal{B}_E(r)\triangleq \{ u\in E \,|\, \| u\|_E \le r\}$.

\begin{defn}\label{TLWP}
Let $E$ be a normed space of functions on $\R$.  Cauchy problem (\ref{NLS}) is locally well posed in $E$ if, for any $M>0$ there exist a $T_M>0$ and a space $Y_{T_M}$ of functions on $[0,T_M]\times \R$ such that: for any $\phi \in \mathcal{B}_E(M)$ there is a unique solution in $u\in Z_{T_M}\triangleq \mathfrak{C}
([0,T_M] ; E) \cap Y_{T_M}$.  Moreover, the map $\phi \mapsto u$ is continuous from
$\mathcal{B}_E(M)$ to $Z_{T_M}$.  
\end{defn}

Now we give the definition of the unconditional well-poseness for (\ref{NLS}).

\begin{defn}\label{ULWP}
Let $E$ be the same as Definition \ref{TLWP}.  Cauchy problem (\ref{NLS}) is unconditionally locally well posed in $E$ if, it is well posed in the sense of Definition \ref{TLWP} with $Z_{T_M}=\mathfrak{C}([0,T_M] ;E)$, namely the uniqueness hold in the space $\mathfrak{C}([0,T_M] ;E)$.
\end{defn}

\begin{rem}
Note that in the case of $E=H^s$, (\ref{NLS}) is locally well posed in the sense of Definition \ref{TLWP} if 
and only if it is locally well posed in the usual sense.  Similarly, it is unconditionally locally well posed
in the sense of Definition \ref{ULWP} if and only if it is unconditionally locally well posed in the usual sense.

\end{rem}

We begin with the ``conditional" well-posedness result.

\begin{prop} \label{LWP}
Let $4/3<p\le 2$ and $0<s<3/2-2/p$.  Then (\ref{NLS}) is locally well posed
in $H_p^s(\R)$ in the sense of Definition \ref{TLWP}.

\end{prop}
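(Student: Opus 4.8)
The plan is to run a contraction mapping argument for the twisted integral equation \eqref{INT} in a suitable auxiliary space, exploiting the smoothing properties of the Schrödinger group on $L^p$-based spaces. First I would set up the problem: writing $v(t) = e^{-it\pa_x^2} u(t)$, a solution of \eqref{NLS} with data $\phi \in H^s_p$ corresponds to a fixed point of the map
\begin{equation*}
\Phi(v)(t) = \phi + i\int_0^t e^{-i\tau \pa_x^2}\left[ (e^{i\tau \pa_x^2} v)(e^{i\tau \pa_x^2} v)(\overline{e^{i\tau \pa_x^2} v}) \right] d\tau.
\end{equation*}
The natural working space is $Y_T = C([0,T]; H^s_p) \cap L^q([0,T]; \widehat{H^s_{r'}})$ (or a Fourier-side variant), where $(q,r)$ is a Schrödinger-admissible pair chosen so that the dual Strichartz / $L^{p'} \to L^p$ dispersive estimates apply; the constraint $4/3 < p \le 2$ together with $0 < s < 3/2 - 2/p$ is exactly what makes such an exponent pair available while keeping enough derivatives to close the trilinear estimate. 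Note that the solution $u$ itself then lies in $\mathfrak{C}([0,T];H^s_p) \cap Y_T$, matching Definition \ref{TLWP} with $Z_{T_M} = \mathfrak{C}([0,T_M];H^s_p) \cap Y_{T_M}$.

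The core of the argument is a trilinear estimate of the form
\begin{equation*}
\left\| \int_0^t e^{-i\tau \pa_x^2}\left[ w_1 w_2 \overline{w_3} \right] d\tau \right\|_{C([0,T];H^s_p) \cap Y_T} \le C\, T^{\theta} \prod_{j=1}^3 \| e^{-i\tau \pa_x^2} w_j \|_{Y_T}
\end{equation*}
for some $\theta > 0$, where $w_j = e^{i\tau\pa_x^2} v_j$. I would obtain this by combining the inhomogeneous Strichartz estimate (to control the time integral, gaining the factor $T^\theta$ from Hölder in time) with a Sobolev embedding and a Hölder estimate in space to handle the product of three factors: one needs $\|fgh\|_{H^s_\rho} \lesssim \prod \|f\|_{\widehat{H^s_{r'}}}$-type control, which is where the restriction $s < 3/2 - 2/p$ enters through the scaling/embedding bookkeeping. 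Once this multilinear estimate is in hand, $\Phi$ maps a closed ball $\mathcal{B}_{Y_T}(R)$ into itself and is a contraction there, provided $T = T_M$ is chosen small depending only on $M = \|\phi\|_{H^s_p}$; Lipschitz continuity of $\phi \mapsto v$ (hence $\phi \mapsto u$) from $\mathcal{B}_{H^s_p}(M)$ to $Z_{T_M}$ follows from the same estimates applied to differences, and uniqueness within $Z_{T_M}$ is the standard consequence of the contraction. Continuity in $t$ with values in $H^s_p$ follows from the integral equation and dominated convergence.

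The main obstacle I expect is establishing the trilinear estimate with a genuinely positive power of $T$ while respecting the sharp upper bound $s < 3/2 - 2/p$: the Schrödinger group is not bounded on $H^s_p$ for $p \neq 2$, so every step must be routed through exponents $r$ with $r \geq 2$ via dispersive/Strichartz inequalities, and balancing the three input regularities against the single output regularity — together with the time-integrability needed for the $L^q_t$ component of $Y_T$ — leaves very little room. A secondary technical point is verifying that the admissible pair $(q,r)$ and the auxiliary exponent $\rho$ can be simultaneously chosen within the allowed range of $(s,p)$; I would handle this by an explicit interpolation-of-exponents computation, checking the endpoint cases $p \to 4/3$ and $s \to 3/2 - 2/p$ to confirm the conditions are exactly the ones stated. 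The passage from the fixed point $v$ back to a bona fide solution $u = e^{it\pa_x^2} v$ of \eqref{NLS}, and the verification that $u \in \mathfrak{C}([0,T];H^s_p)$, are then routine.
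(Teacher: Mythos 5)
Your skeleton matches the paper's: pass to the twisted variable, run a contraction in an intersection of a twisted $L^{\infty}_t H^s_p$-type space with a space-time Lebesgue/Sobolev space, and close via an inhomogeneous Strichartz estimate, the fractional Leibniz rule, Sobolev embedding, and H\"older in time to extract a positive power of $T$. That is exactly how Proposition \ref{LWP} is proved in the paper (fixed point in $\mathfrak{L}^{\infty}([0,T];H^s_p)\cap L^q([0,T];H^s_r)$ with explicit exponents).

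However, there is a genuine gap at the single non-standard point of the argument: how the Duhamel term is estimated in the $\mathfrak{L}^{\infty}([0,T];H^s_p)$ norm, i.e.\ how one keeps $v(t)=e^{-it\pa_x^2}u(t)$ in $H^s_p$ with $p<2$. Your trilinear estimate asserts control of the $C([0,T];H^s_p)\cap Y_T$ norm of the Duhamel integral, but the tools you invoke --- inhomogeneous Strichartz estimates (even the non-admissible ones of Proposition \ref{inhomostr}), dispersive $L^p\to L^{p'}$ decay, Sobolev embedding, H\"older in time --- all produce output space norms $L^r$ with $r\ge 2$; none of them yields an $L^p$-based output norm with $p<2$, precisely because $e^{it\pa_x^2}$ is unbounded on $L^p$. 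The paper closes this component with a different ingredient: the Fefferman--Stein type refinement of the Strichartz estimate, $\|e^{it\pa_x^2}\phi\|_{L^q_tL^r_x}\le C\|\hat\phi\|_{L^{p'}}$ (Proposition \ref{FS}), used in its dual form (Corollary \ref{DFS}), which bounds
\begin{equation*}
\Bigl\| \int^t_0 e^{i(t-\tau)\pa_x^2} F(\tau)\,d\tau \Bigr\|_{\mathfrak{L}^{\infty}([0,T];H^s_p)}
\le C\bigl\| \tau^{\frac{1}{p}-\frac{1}{2}} F(\tau) \bigr\|_{L^{\sigma}([0,T];H^s_{\rho})},
\end{equation*}
with a time weight $\tau^{1/p-1/2}$ that must then be absorbed by H\"older in time; this is where the restriction $4/3<p$ and the specific exponent bookkeeping really enter. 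Without this estimate (or an equivalent), your contraction cannot be closed in any space containing the $\mathfrak{C}([0,T];H^s_p)$ component required by Definition \ref{TLWP}. A secondary correction: the pair $(q,r)$ for the linear part cannot be Schr\"odinger-admissible in the usual $L^2$ sense; for $H^s_p$ data one needs Kato's non-admissible homogeneous estimates with the scaling $2/q+1/r=1/p$ and $1/q+1/r<1/2$ (Proposition \ref{PStr}).
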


The conditional well-posedness results in $H^s_p$ can be proved arguing
similarly as in the proof of \cite[Theorem 1.1]{107jfa}.  So we only give a sketch of
proof of this proposition at the end of the paper.

The main results of this paper is the unconditional well-posedness for the Cauchy problem
under additional regularity assumptions on data:

\begin{thm}\label{ULWPTH}
Let $4/3<p\le 2$ and $s<3/2-2/p$.  Then (\ref{NLS}) is unconditionally locally
well posed in $H_p^s(\R)$ in the sense of {\rm Definition \ref{ULWP}} if:
\begin{enumerate}
\item
$4/3<p \le 3/2$ and $s>0$.
\item
$3/2<p  \le 2$ and $s> 2/3-1/p$.
\end{enumerate}

\end{thm}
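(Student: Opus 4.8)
The plan is to establish the uniqueness part of unconditional well-posedness by showing that any solution $u$ with the twisted variable $v = e^{-it\pa_x^2}u \in C([0,T];H^s_p)$ automatically gains the auxiliary regularity built into $Y_T$ in Proposition \ref{LWP}, so that the already-known conditional uniqueness applies. Concretely, I would work with the twisted integral equation (\ref{INT}) for $v$. Setting $\Phi(t) = e^{it\pa_x^2}$ for brevity, the trilinear Duhamel term is
\begin{equation*}
\mathcal{N}(v)(t) = i\int_0^t \Phi(-\tau)\bigl[(\Phi(\tau)v)(\Phi(\tau)v)\overline{(\Phi(\tau)v)}\bigr]\,d\tau,
\end{equation*}
and the key point is a trilinear estimate of the schematic form $\|\mathcal{N}(v)\|_{Y_T} \lesssim T^{\theta}\|v\|_{C([0,T];H^s_p)}^3$ with $\theta>0$, valid in exactly the range of $(p,s)$ stated in the theorem. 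Since $v \in C([0,T];H^s_p)$ is given, the right-hand side is finite, so $\mathcal{N}(v)\in Y_T$, hence $v\in Y_T$ as well; combined with $v\in C([0,T];H^s_p)$ this places $u$ in $Z_T$, where uniqueness is already known from Proposition \ref{LWP}. Continuity of the data-to-solution map and existence are inherited from that proposition, so only this "unconditional regularity" step is genuinely new.

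The main work, then, is the trilinear estimate in the twisted variable. I would exploit the smoothing effect of the free evolution: the product $\Phi(\tau)v\cdot\Phi(\tau)v\cdot\overline{\Phi(\tau)v}$, after applying $\Phi(-\tau)$ and integrating in $\tau$, behaves much better than a naive Hölder bound suggests, because of the oscillatory factor $e^{i\tau(\xi_1^2+\xi_2^2-\xi_3^2-\xi^2)}$ appearing on the Fourier side (this is exactly the mechanism that gives the $1/6$ threshold in Kato's $H^s$ result and, in the $L^p$-setting, the results of \cite{Zhou} and \cite{107jfa}). The natural tools are the Fourier restriction/extension estimates and $L^{p'}$--$L^p$ dispersive bounds for $\Phi(\tau)$ together with a careful bookkeeping of the $\langle D\rangle^s$ weights distributed among the three factors. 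The threshold $s<3/2-2/p$ and the two regimes in $p$ — $s>0$ for $p\le 3/2$ and $s>2/3-1/p$ for $p>3/2$ — should emerge as the precise conditions under which the relevant convolution/Strichartz-type inequality closes with a positive power of $T$; I expect to split into high–high and high–low frequency interactions and to use a Sobolev embedding $H^s_p \hookrightarrow L^q$ for an appropriate $q$ in the low-regularity regime, which is where the constraint $s>2/3-1/p$ enters.

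The step I expect to be the main obstacle is proving this trilinear estimate sharply enough to cover the full stated range rather than a proper subrange: one has to extract enough time-decay from the dispersive estimate to beat the loss coming from the three $\langle D\rangle^s$ factors in an $L^p$ (not $L^2$) framework, where the Fourier transform is no longer an isometry and Plancherel is unavailable. A secondary technical point is ensuring the estimate is genuinely a contraction-type bound so that, beyond mere membership in $Y_T$, one can also run a standard difference argument $\|v_1-v_2\|_{Y_T}\lesssim T^\theta(\cdots)\|v_1-v_2\|_{\cdots}$ if one prefers a self-contained uniqueness proof in $\mathfrak{C}([0,T];H^s_p)$ without invoking Proposition \ref{LWP}; but invoking that proposition is cleaner, and I would do so. Finally, one must verify the continuity-in-time claim $u=e^{it\pa_x^2}v(t)$ with $v\in C([0,T];H^s_p)$ is consistent with membership in $\mathfrak{C}([0,T];H^s_p)$, which is immediate from the definition.
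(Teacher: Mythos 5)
Your overall reduction (existence and continuity of the data-to-solution map are inherited from Proposition \ref{LWP}; the genuinely new content is uniqueness in $\mathfrak{C}([0,T];H^s_p)$) agrees with the paper, but the core of your argument is a trilinear estimate $\|\mathcal{N}(v)\|_{Y_T}\lesssim T^{\theta}\|v\|_{C([0,T];H^s_p)}^3$ that you never prove, and the mechanism you propose for it is not the one that produces the stated range. You plan to extract the gain from the oscillatory factor $e^{i\tau(\xi_1^2+\xi_2^2-\xi_3^2-\xi^2)}$, restriction/extension estimates and high--low frequency decompositions; this is a normal-form type strategy, and your attribution of Kato's $1/6$ threshold to it is inaccurate --- Kato's result, like the present theorem, rests on the elementary combination of Sobolev embedding and Strichartz estimates for non-admissible pairs, with no multilinear oscillatory cancellation. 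As written, your proposal defers the entire difficulty to an estimate whose proof is only described by ``I expect'' and ``should emerge,'' so it is not yet a proof, and the route you sketch is substantially harder than necessary (you would be fighting the absence of Plancherel in $L^p$ for no reason).

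The missing idea is the paper's Lemma \ref{Sobolev}: an \emph{equation-free} embedding $\mathfrak{L}^{\infty}([0,T];H^s_p)\hookrightarrow L^q([0,T];L^r)$, obtained pointwise in time from the dispersive bound $\|e^{it\pa_x^2}\|_{L^p\to L^{p'}}\lesssim |t|^{-(1/p-1/2)}$ together with Sobolev embedding, under the conditions $s\ge 1-1/p-1/r$ and $q(1/p-1/2)<1$; these two conditions, combined with the admissibility window of the inhomogeneous Strichartz estimate of Proposition \ref{inhomostr}, are exactly where the hypotheses $s>0$ (for $4/3<p\le 3/2$) and $s>2/3-1/p$ (for $3/2<p\le 2$) enter. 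With this embedding, any two solutions $u,v\in\mathfrak{C}([0,T];H^s_p)$ with the same datum lie in $L^q([0,T];L^r)$, and uniqueness follows from the standard difference estimate $\|u-v\|_{L^q([0,T_0];L^r)}\le 3CT_0^{1-1/p}\eta_T^2\|u-v\|_{L^q([0,T_0];L^r)}$ plus iteration over subintervals of length $T_0$ depending only on $\eta_T$ --- precisely the ``self-contained difference argument'' you mention and set aside. Note also that your preferred alternative, upgrading $v$ to membership in $Y_{T_M}$ and invoking the uniqueness of Proposition \ref{LWP}, would in any case require an additional continuation/iteration step to match the local time $T_M$ with the given interval $[0,T]$, which your sketch does not address.
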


\begin{rem}
Let $s_c(p)\triangleq \max(0,2/3-1/p)$ for $1\le p \le 2$. Then Theorem \ref{ULWPTH} asserts that
(\ref{NLS}) is unconditionally locally well posed in $H_p^s$ for $s>s_c(p)$ and $4/3<p\le 2$.  The exponent $s_c(p)$ is considered 
as a natural threshold for the unconditional well-posedness in the following sense.  In the case of the cubic NLS, one
need $|u|^2u \in L_{loc}^1$ so that the nonlinear part makes sense in the distributional framework.  Thus we 
require $u \in L_{loc}^3$.  By the well-known time decay property of the evolution group $e^{it\pa_x^2}$, we have
$u(t) \in H^s_{p'},\,t\neq 0$ if $e^{-it\pa_x^2} u(t) \in H^s_p$ for $1\le p\le 2$.  When $3/2<p\le 2$, we see that $u(t) \in L^3$ if
$s\ge 2/3-1/p=s_c(p)$ by Sobolev's embedding.  When $1<p\le 3/2$, we assume $e^{-it\pa^2}u(t) \in L^p$.  Then
$u(t) \in L^{p'}$ and thus we have $u(t) \in L_{loc}^3$ since $3\le p ' \le \infty$. 
\end{rem}

\section{Proof of the main results}
\subsection{Key lemma and proposition}
Our unconditional well-posedness results can be proved by a basic embedding theorem for $\mathfrak{L}^{\infty}(I ; H^s_p)$ and
Strichartz type estimates.

\begin{lem} \label{Sobolev}
Let $I\subset \R$ be a finite interval.  Let $2\le q ,r \le \infty,\, 1\le p \le 2, s\ge 0$ satisfy
\begin{equation*}
 s\ge 1-\frac{1}{p}-\frac{1}{r},\quad q \left(\frac{1}{p}-\frac{1}{2} \right) <1\,( \text{ with the convention that} \,\,\infty \cdot 0=0).
\end{equation*}
Then
\begin{equation*}
\| f\|_{L^q(I; L^r(\R))} \le C_I \| f\|_{\mathfrak{L}^{\infty} ( I ;H^s_p (\R))}.
\end{equation*}
\end{lem}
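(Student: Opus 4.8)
The plan is to reduce the space--time estimate to a pointwise-in-$t$ dispersive bound for $f(t)=e^{it\pa_x^2}v(t)$ and then integrate in $t$; the condition $q(1/p-1/2)<1$ is exactly the local integrability one needs at $t=0$.

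First I would fix $f\in\mathfrak{L}^{\infty}(I;H^s_p)$, write $f(t)=e^{it\pa_x^2}v(t)$ with $v\in L^{\infty}(I;H^s_p)$ so that $\|v\|_{L^{\infty}(I;H^s_p)}=\|f\|_{\mathfrak{L}^{\infty}(I;H^s_p)}$, and estimate $\|f(t)\|_{L^r}$ for a.e.\ fixed $t\in I$, $t\neq0$. Since $\la D\ra^s$ and $e^{it\pa_x^2}$ are both Fourier multipliers they commute, so $\|f(t)\|_{H^s_{p'}}=\|e^{it\pa_x^2}\la D\ra^s v(t)\|_{L^{p'}}$. I would then invoke the one-dimensional dispersive estimate
\begin{equation*}
\|e^{it\pa_x^2}g\|_{L^{p'}(\R)}\le C\,|t|^{-(1/p-1/2)}\|g\|_{L^p(\R)}\qquad(1\le p\le2),
\end{equation*}
obtained by interpolating the $L^1\to L^{\infty}$ kernel bound $\||t|^{1/2}e^{it\pa_x^2}g\|_{L^{\infty}}\le C\|g\|_{L^1}$ with $L^2$-conservation, applied to $g=\la D\ra^s v(t)$. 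Combined with the Sobolev embedding $H^s_{p'}(\R)\hookrightarrow L^r(\R)$ --- which holds because $s\ge1-1/p-1/r=1/p'-1/r$ (and, for the embedding on the full line, because $r\ge p'$) --- this gives the pointwise bound
\begin{equation*}
\|f(t)\|_{L^r}\le C\,|t|^{-(1/p-1/2)}\,\|f\|_{\mathfrak{L}^{\infty}(I;H^s_p)}\qquad\text{for a.e.\ }t\in I.
\end{equation*}

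It then remains to take the $L^q(I)$ norm in $t$: $\|f\|_{L^q(I;L^r)}\le C\,\|f\|_{\mathfrak{L}^{\infty}(I;H^s_p)}\,\||t|^{-(1/p-1/2)}\|_{L^q(I)}$. Since $I$ is a finite interval and $q(1/p-1/2)<1$, the integral $\int_I|t|^{-q(1/p-1/2)}\,dt$ converges to a finite constant $C_I$, which yields the assertion. When $p=2$ the decay exponent is $0$, so the factor is just $|I|^{1/q}$; and by the convention $\infty\cdot0=0$ the case $q=\infty$ can only occur when $p=2$, so no genuine singularity arises there.

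The argument is essentially a routine combination of the dispersive estimate with Sobolev embedding, and the only step I expect to be genuinely constrained by the hypotheses --- rather than automatic --- is the behaviour near $t=0$: for $p<2$ the decay factor $|t|^{-(1/p-1/2)}$ is unbounded there, and the hypothesis $q(1/p-1/2)<1$ is precisely the borderline condition making it locally $L^q$ in time, while the finiteness of $I$ controls the contribution away from $0$; together these are what permit dropping the auxiliary Strichartz-type norm and keeping only the $\mathfrak{L}^{\infty}$-control. I would also watch the endpoint $r=\infty$, where $H^s_{p'}(\R)\hookrightarrow L^{\infty}(\R)$ needs $s\ge1/p'$ and the equality case may require a Besov refinement, as well as the measurability of $t\mapsto\|f(t)\|_{L^r}$, but these are minor technical points.
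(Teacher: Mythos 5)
Your argument is correct and is essentially the paper's own proof: the fixed-time dispersive bound $\|e^{it\pa_x^2}g\|_{L^{p'}}\lesssim |t|^{-(1/p-1/2)}\|g\|_{L^p}$ combined with Sobolev embedding yields $\|f(t)\|_{L^r}\lesssim |t|^{-(1/p-1/2)}\|e^{-it\pa_x^2}f(t)\|_{H^s_p}$, after which one takes the $L^q(I)$-norm and uses $q(1/p-1/2)<1$ together with the finiteness of $I$. The only cosmetic difference is that the paper routes through the homogeneous embedding $\dot{H}^{1-1/p-1/r}_{p'}\hookrightarrow L^r$, which carries the same implicit requirement $r\ge p'$ (i.e. $1-1/p-1/r\ge 0$) that you flag explicitly and which holds in all of the paper's applications.
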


\begin{proof} Let $2\le r \le \infty$.  We first fix $t\in I$.  Then, by Sobolev's embedding and the well-known decay property of
the free group $e^{it\pa_x^2}$, we have
\begin{equation*}
\|e^{it\pa_x^2} f(t) \|_{L^r} \le C \|e^{it\pa_x^2} f(t)\|_{\dot{H}^{s(r)}_{p'}} \le C (4\pi |t|)^{-(1/p-1/2)} \|f(t)\|_{\dot{H}^{s(r)}_p},
\end{equation*}
where $s(r)=1-1/p-1/r$.  Replacing $f$ with $e^{-it\pa_x^2}f$, we obtain
\begin{equation}
\| f(t) \|_{L^r} \le (4\pi |t|)^{-(\frac{1}{p}-\frac{1}{2})} \|e^{-it\pa_x^2}f(t)\|_{H^s_p},
\end{equation}
for any $s\ge s(r)$ and $t\in I$.  The desired embedding is obtained after taking $\|\cdot\|_{L^{q}(I)}$-norm of both sides and applying H\"older's inequality in the 
right hand side.

\end{proof}

The next key estimates are the inhomogeneous Strichartz inequalities for not necessarily admissible pairs.

\begin{prop} {\rm (See, e.g. \cite[Theorem 2.1]{Kato} )} \label{inhomostr}
Let $2\le q,r \le \infty\,\,,1<\gamma,\rho \le 2$ be such that
\begin{equation*}
2+\frac{2}{q}+\frac{1}{r}=\frac{2}{\gamma}+\frac{1}{\rho}
\end{equation*}
and 
\begin{equation*}
\frac{1}{q}+\frac{1}{r}<\frac{1}{2},\quad \frac{3}{2}-\frac{1}{\rho} <\frac{1}{\gamma}<1.
\end{equation*}
Then
\begin{equation}
\left\| \int^t_0 e^{i(t-\tau)\pa_x^2} F(\tau) d\tau \right\|_{L^q( I; L^r(\R))} \label{inhomostrest}
\le \| F\|_{L^{\gamma}( I; L^{\rho}(\R))}.
\end{equation}

\end{prop}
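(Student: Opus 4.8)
The plan is to deduce this from the general theory of inhomogeneous Strichartz estimates; with the stated exponents it is \cite[Theorem 2.1]{Kato}, so rather than reprove it I would organize the argument as follows. First note that the hypotheses are consistent only when $\rho<2$: if $\rho=2$ then $3/2-1/\rho=1$ and there is no $\gamma$ with $3/2-1/\rho<1/\gamma<1$. Hence $1/\gamma>3/2-1/\rho>1/2$, so $\gamma\in(1,2)$, while $1/q+1/r<1/2$ forces $q>2$. In particular $q>\gamma$, so by the Christ--Kiselev lemma it suffices to bound the non-retarded operator
\begin{equation*}
\mathcal{T}F(t)\triangleq\int_{\R}e^{i(t-\tau)\pa_x^2}F(\tau)\,d\tau=e^{it\pa_x^2}\int_{\R}e^{-i\tau\pa_x^2}F(\tau)\,d\tau
\end{equation*}
from $L^\gamma(I;L^\rho)$ to $L^q(I;L^r)$.

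It is worth isolating the easy sub-case in which both $(q,r)$ and $(\gamma',\rho')$ are Schr\"odinger-admissible, i.e. $2/q+1/r=1/2$ and $2/\gamma'+1/\rho'=1/2$; this is compatible with the scaling identity $2+2/q+1/r=2/\gamma+1/\rho$, since each side then equals $5/2$. For such pairs the bound for $\mathcal{T}$ is immediate from the factorization above: the homogeneous Strichartz estimate gives $\|e^{it\pa_x^2}g\|_{L^q(I;L^r)}\lesssim\|g\|_{L^2}$, and its dual gives $\|\int_{\R}e^{-i\tau\pa_x^2}F(\tau)\,d\tau\|_{L^2}\lesssim\|F\|_{L^\gamma(I;L^\rho)}$. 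The content of the proposition is that the two equalities above can be relaxed to the strict inequalities of the statement: indeed $1/q+1/r<1/2$ is exactly the ``acceptability'' of $(q,r)$, and $3/2-1/\rho<1/\gamma$ is exactly $1/\gamma'+1/\rho'<1/2$, the acceptability of $(\gamma',\rho')$.

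To cover this full acceptable range I would run the now-standard dyadic argument: decompose $\mathcal{T}=\sum_{j\in\Z}\mathcal{T}_j$ with $\mathcal{T}_jF(t)=\int_{|t-\tau|\sim2^j}e^{i(t-\tau)\pa_x^2}F(\tau)\,d\tau$, and estimate each block by combining the one-dimensional dispersive bound $\|e^{is\pa_x^2}h\|_{L^b}\lesssim|s|^{-(1/2-1/b)}\|h\|_{L^{b'}}$, a rescaled local-in-time Strichartz inequality on time intervals of length $\sim2^j$, and H\"older's and Young's inequalities in time; interpolating among the exponents available on each block should produce $\|\mathcal{T}_jF\|_{L^q(I;L^r)}\lesssim2^{-\delta|j|}\|F\|_{L^\gamma(I;L^\rho)}$ for some $\delta>0$, after which one sums the geometric series in $j$. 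The scaling identity is exactly what places the borderline choice of intermediate exponents at power zero, and the two acceptability inequalities are precisely what one needs in order to perturb off that borderline and gain the decay $2^{-\delta|j|}$ at both ends $j\to\pm\infty$. This last bookkeeping --- together with the treatment of the endpoint exponents --- is the only genuine difficulty; it is carried out in \cite[Theorem 2.1]{Kato}, and in greater generality in the work of Foschi and of Vilela on inhomogeneous Strichartz estimates, which I would simply cite rather than reproduce.
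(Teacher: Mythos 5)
The paper offers no proof of this proposition at all: it is quoted verbatim from Kato's $L^{q,r}$-theory (the cited Theorem 2.1), so your decision to cite that result (and Foschi/Vilela) rather than reprove it matches the paper's treatment exactly. Your supporting remarks are also sound --- the exponent arithmetic checks out: $\rho=2$ is indeed excluded, $\gamma<2<q$ so Christ--Kiselev applies, the scaling identity $2+2/q+1/r=2/\gamma+1/\rho$ is the correct one, and the two strict inequalities are precisely Kato's acceptability conditions for $(q,r)$ and $(\gamma',\rho')$.
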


\subsection{Proof of Theorem \ref{ULWP}} We prove the unconditional well-posedness results.  We assume that the conditional local well-posedness result (Proposition \ref{LWP}) is given, which will be proved in the last section.  Then it is enough to
show the uniqueness of the solution in the space $\mathfrak{C}([0,T]; H^s_p)$ to conclude the unconditional well-posedness. We first consider case (i) and we let $4/3<p<3/2$ and $s>0$.  We may assume $s$ is sufficiently small, since
the uniquness of the solution in a function space also implies the uniquenss in any smaller spaces.  Let $\delta>0$ and we set
\begin{equation*}
\frac{1}{q}=\frac{1}{p}-\frac{1}{2} +\frac{\delta}{2},\quad \frac{1}{r}=1-\frac{1}{p}-\delta,\quad
\frac{1}{\rho}=3-\frac{3}{p}-3\delta(=\frac{3}{r}),\quad \frac{1}{\gamma}=\frac{2}{p}-\frac{1}{2}+\frac{3}{2}\delta.
\end{equation*}
Then the pair $(q,r)$ satisfies the assumption of Lemma \ref{Sobolev} with $s=\delta$ and the quadruple $(q,r,\gamma,\rho)$ satisfies the assumption of Proposition \ref{inhomostr} if
$\delta$ is sufficiently small.  Now we consider two solutions $u,v\in \mathfrak{C}([0,T];H^{\delta}_p(\R))$ of (\ref{NLS}) with $u(0)=v(0)$.  We want to show that $u(t)=v(t)\,\,\forall t\in [0,T]$.  By Lemma \ref{Sobolev} we see that
$u,v\in L^q([0,T] ; L^r)$.  Let $0<T_0 \le T$.  We estimate the difference $\|u-v\|_{L^q([0,T_0] ; L^r)}$.  By Duhamel's formula and Proposition \ref{inhomostr},
\begin{eqnarray*}
\|u-v\|_{L^q([0,T_0] ; L^r)} &= & \left\| \int^t_0 e^{i(t-\tau)\pa_x^2} (|u|^2u-|v|^2v) d\tau \right\|_{L^q([0,T_0] ; L^r)} \\
& \le & C\left\| |u|^2u-|v|^2v \right\|_{L^{\gamma}([0,T_0] ; L^{\rho} )}.
\end{eqnarray*}
By H\"older's inequality in the time variable, the norm in the right hand side is estimated by
\begin{eqnarray*}
&&\|u^2(\bar{u} -\bar{v})\|_{L^{\gamma}([0,T_0] ; L^{\rho} )} +\|u\bar{v}(u-v)\|_{L^{\gamma}([0,T] ; L^{\rho} )}+\|v\bar{v}(u-v)\|_{L^{\gamma}([0,T_0] ; L^{\rho} )}  \\
&\le& C T_0^{1-\frac{1}{p}} \left( \|u \|_{L^q([0,T];L^r)}^2 +\|u \|_{L^q([0,T];L^r)}\|v \|_{L^q([0,T];L^r)} 
+\|v \|_{L^q([0,T];L^r)}^2 \right) \|u-v\|_{L^q([0,T_0];L^r)}.
\end{eqnarray*}
We put $\eta_T \triangleq \max \left( \|u\|_{L^q([0,T];L^r)}, \|v\|_{L^q([0,T];L^r)} \right)$.  Then we have
\begin{equation}
\|u-v\|_{L^q([0,T] ; L^r)} \le 3CT_0^{1-\frac{1}{p}}  \eta_T^2 \|u-v\|_{L^q([0,T] ; L^r)} . \label{difference}
\end{equation}
Now we choose $T_0$ so that
\begin{equation*}
3CT_0^{1-\frac{1}{p}}  \eta_T^2 \le \frac{1}{2}.
\end{equation*}
Then $\|u-v\|_{L^q([0,T_0];L^r)}=0$ and thus $u(t)=v(t),\,\forall t \in [0,T_0]$.  In a similar manner, we can see that $u(t)=v(t),\,\forall t \in [T_0, 2T_0]$, since
the time interval $T_0$ can be determined depending only on $\eta_T$.  Repeating this argument, we finally get $u=v$ on $[0,T]$, which concludes the uniqueness
in  the space $\mathfrak{C}([0,T]\,; H^{\delta}_p)$.

The uniqueness assertion for the case (ii) of Theorem \ref{ULWP} can be treated in the same manner.  We let $3/2\le p\le 2$ and we assume $s>2/3-1/p$ and $s$ is sufficiently close to $2/3-1/p$.  For a sufficiently small 
$\delta>0$ we may write $s = 2/3-1/p+\delta$.  We set
\begin{equation*}
\frac{1}{q}=\frac{1}{6}+\frac{\delta}{2},\quad \frac{1}{r}=\frac{1}{3}-\delta,\quad \frac{1}{\rho}=1-\frac{3}{2}\delta (=\frac{3}{r}),\quad \frac{1}{\gamma}=\frac{5}{6}+\frac{3}{4}\delta.
\end{equation*}
Then it is easy to check that $q,r,\gamma,\rho$ satisfy the  assumption of Proposition \ref{inhomostr} if $\delta$ is small enough.  Moreover, 
$s,q,r$ satisfy the condition of Lemma \ref{Sobolev}, and thus
\begin{equation*}
\mathfrak{L}^{\infty} ([0,T] ;H^s_p ) \hookrightarrow L^q([0,T] ; L^r).
\end{equation*}
Then we get the desired uniqueness in $\mathfrak{C}([0,T] ; H^s_p)$ by considering two solutions $u,v\in \mathfrak{C}([0,T] ; H^s_p)$
with $u(0)=v(0)$ and estimating the difference $u-v$ in $L^q([0,T] ;L^r)$ as in case (i).

\section{ Proof of conditional well-posedness}

\subsection{Strichartz estimates}
In the last section we prove the conditional well-posedness.  We exploit the standard Strichartz technique.  
This approach is well known in the case where the initial data lie in the $L^2$ and $H^s$ spaces $(s\ge 0)$.  But it can also work
well for the case where data are in $L^p,\,4/3<p<2$. 
We begin with the estimates for the homogeneous equation.

\begin{prop} (\cite[Theorem 3.2]{Kato}) \label{PStr}
Let $1<p\le 2$ and let $q,r \in [2,\infty]$ be such that
\begin{equation*}
\frac{2}{q}+\frac{1}{r}=\frac{1}{p}
\end{equation*}
and
\begin{equation*}
\frac{1}{q}+\frac{1}{r}<\frac{1}{2}.
\end{equation*}
Then the following estimate holds true:
\begin{equation}
\|e^{it\pa_x^2} \phi \|_{L^q(\R; L^r(\R))} \le C\|\phi \|_{L^p(\R)}. \label{LPstr}
\end{equation}
In particular, for any $s \in \R$ the estimate
\begin{equation}
\|e^{it\pa_x^2} \phi \|_{L^q(\R; H^s_r(\R))} \le C\|\phi \|_{H^s_p(\R)} \label{HPstr}
\end{equation}
holds true.
\end{prop}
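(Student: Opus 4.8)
The plan is to recast the assertion as a Fourier extension estimate for the parabola. First I would note that \eqref{HPstr} is immediate from \eqref{LPstr}: the operator $\langle D\rangle^s$ is a Fourier multiplier and hence commutes with $e^{it\pa_x^2}$, so applying \eqref{LPstr} to $\langle D\rangle^s\phi$ and using $\|g\|_{H^s_p}=\|\langle D\rangle^sg\|_{L^p}$ gives \eqref{HPstr}. So the whole content is \eqref{LPstr}. Writing $g=\hat\phi$, we have $e^{it\pa_x^2}\phi(x)=\int_\R e^{i(x\xi-t\xi^2)}g(\xi)\,d\xi=:(Eg)(x,t)$, which up to a bounded smooth weight is the adjoint Fourier restriction operator for the parabola $\{(\xi,\xi^2):\xi\in\R\}\subset\R^2$. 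Since $\|g\|_{L^{p'}}\le\|\phi\|_{L^p}$ by the Hausdorff--Young inequality for $1<p\le2$, it suffices to prove
\begin{equation*}
\|Eg\|_{L^q_t(\R;\,L^r_x(\R))}\le C\|g\|_{L^{p'}(\R)}
\end{equation*}
in the stated range of exponents.

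When $p=2$ this is precisely the classical (non-endpoint) Strichartz estimate. By $TT^*$ it reduces to the retarded inequality $\bigl\|\int_\R e^{i(t-s)\pa_x^2}F(s)\,ds\bigr\|_{L^q_tL^r_x}\le C\|F\|_{L^{q'}_tL^{r'}_x}$, which follows from the one-dimensional dispersive bound $\|e^{i\sigma\pa_x^2}\|_{L^{r'}\to L^r}\le C|\sigma|^{-(1/2-1/r)}$ (interpolation of the $L^1\to L^\infty$ and $L^2\to L^2$ bounds) together with the one-dimensional Hardy--Littlewood--Sobolev inequality in the $t$-variable --- the relation $2/q+1/r=1/2$ being exactly the HLS balance, and $1/q+1/r<1/2$ keeping all exponents admissible. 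For $1<p<2$ the idea is to exploit parabolic dilation. I would decompose $g=\sum_{N\in2^\Z}g_N$ with $g_N$ supported on $|\xi|\sim N$; these pieces have essentially disjoint frequency supports, so $\|g\|_{L^{p'}}^{p'}=\sum_N\|g_N\|_{L^{p'}}^{p'}$, while $Eg_N$ is nothing but the $x$-frequency localization $P^x_N(Eg)$. Hence, by the Littlewood--Paley inequality in $x$ and Minkowski's inequality in $(t,x)$ --- legitimate when $2\le q<\infty$ and $2\le r<\infty$; the endpoint $r=\infty$ I would treat separately and directly, from $\|Eg(t,\cdot)\|_{L^\infty_x}\le\|g\|_{L^1}$ and the stationary-phase decay $\|Eg(t,\cdot)\|_{L^\infty_x}\lesssim|t|^{-1/2}\|g\|_{L^1}$ for large $|t|$, the hypothesis $1/q+1/r<1/2$ (which amounts to $p>1$ there) giving integrability in $t$ over all of $\R$ --- one gets $\|Eg\|_{L^q_tL^r_x}\lesssim(\sum_N\|Eg_N\|_{L^q_tL^r_x}^2)^{1/2}$. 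Rescaling each block to unit frequency via $(\xi,x,t)\mapsto(N\xi,\,x/N,\,t/N^2)$ and using $2/q+1/r=1/p$, one checks that $\|Eg_N\|_{L^q_tL^r_x}\le C\|g_N\|_{L^{p'}}$ with $C$ independent of $N$, provided the estimate holds at unit scale; summing via the embedding $\ell^2\hookrightarrow\ell^{p'}$ --- which requires $p'\ge2$, i.e.\ exactly $p\le2$ --- then completes the reduction.

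The step I expect to be the genuine obstacle is the unit-scale estimate $\|Eg\|_{L^q_t(\R;L^r_x)}\le C\|g\|_{L^{p'}}$ for $g$ supported in $\{|\xi|\sim1\}$. When $p'>2$ this cannot be obtained from the dispersive estimate and H\"older's inequality alone --- the pointwise-in-$t$ bound $\|Eg(t,\cdot)\|_{L^r_x}\lesssim|t|^{1/r-1/2}\|g\|_{L^1}$ fails to be locally $L^q$ near $t=0$ precisely when $1/q+1/r<1/2$ --- so one is forced to use a genuine two-dimensional oscillatory-integral input, namely the Carleson--Sj\"olin (and Zygmund) extension estimates $\|Eg\|_{L^Q(\R^2)}\le C\|g\|_{L^a}$ for plane curves of non-vanishing curvature, interpolated against the $L^2$-Strichartz estimates, with the large-$|t|$ contribution disposed of by (non-)stationary phase --- the gain $1/q+1/r<1/2$ being exactly what makes that decay summable over $t\in\R$. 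This is the substance of \cite[Theorem 3.2]{Kato}, to which I would refer for the remaining details.
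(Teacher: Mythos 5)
The paper itself does not prove this proposition: it is quoted verbatim from \cite[Theorem 3.2]{Kato}, the only implicit step being that \eqref{HPstr} follows from \eqref{LPstr} because $\langle D\rangle^s$ commutes with $e^{it\pa_x^2}$ --- which is exactly your first paragraph. The trouble is with your sketch of \eqref{LPstr}. By applying Hausdorff--Young at the outset you replace \eqref{LPstr} by the strictly stronger Fourier--Lebesgue bound $\|e^{it\pa_x^2}\phi\|_{L^q_t L^r_x}\le C\|\hat\phi\|_{L^{p'}}$. In this paper that is a different statement, Proposition \ref{FS}, and it is available only under the additional constraints $4/3<p\le 2$ and $0<1/q<1/4$; Proposition \ref{PStr} is asserted for all $1<p\le 2$ and all $q\ge 2$ with $1/q+1/r<1/2$, so it contains exponents such as $(p,q,r)=(3/2,3,\infty)$, as well as the whole range $1<p\le 4/3$, which your reduction target does not reach. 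Your closing attribution is also off: the restriction-theoretic input (Fefferman--Stein, Zygmund, Carleson--Sj\"olin) is the substance of Proposition \ref{FS} (see \cite{107T,Grunrock,CVV}), not of \cite[Theorem 3.2]{Kato}; Kato keeps the $L^p$ norm on the physical side and proves the estimate by classical means (the $L^p\to L^{p'}$ dispersive bound, duality and interpolation with the $L^2$ Strichartz theory), which is precisely why his range is wider than the $\widehat{L^p}$ one.

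Even within your own scheme the decisive step fails. After Littlewood--Paley and Minkowski you reach $\bigl(\sum_N\|Eg_N\|_{L^q_tL^r_x}^2\bigr)^{1/2}\lesssim\bigl(\sum_N\|g_N\|_{L^{p'}}^2\bigr)^{1/2}$, and to conclude you must dominate this $\ell^2$ expression by $\bigl(\sum_N\|g_N\|_{L^{p'}}^{p'}\bigr)^{1/p'}\sim\|g\|_{L^{p'}}$; that inequality needs $p'\le 2$, i.e.\ $p\ge 2$, whereas the embedding $\ell^2\hookrightarrow\ell^{p'}$ you invoke gives only the reverse bound $\|\cdot\|_{\ell^{p'}}\le\|\cdot\|_{\ell^2}$. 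So the recombination of dyadic scales collapses exactly for $p<2$, which is the entire content of the proposition; this failure of naive scale summation is also why the known $\widehat{L^p}$-type estimates carry the extra conditions $q>4$, $p>4/3$. A further slip: your large-time treatment of the $r=\infty$ endpoint uses $\|Eg(t,\cdot)\|_{L^\infty_x}\lesssim|t|^{-1/2}\|g\|_{L^1}$, which is false with $g$ on the Fourier side (the decay estimate involves $\|\phi\|_{L^1}$, not $\|\hat\phi\|_{L^1}$: a narrow $L^1$-normalized bump $g$ gives $|Eg|\approx 1$ on large sets for every $t$). As it stands the proposal neither reconstructs Kato's argument nor provides an independent proof in the stated range; the clean options are to cite \cite[Theorem 3.2]{Kato} as the paper does, or to argue on the physical side (dispersive decay plus interpolation), without passing through Hausdorff--Young.
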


\medskip

The estimate (\ref{LPstr}), (\ref{HPstr}) are exploited to estimate the linear part of the corresponding integral equation.  Moreover, when $3/4<p\le 2$, stronger estimates are known:

\begin{prop} (\cite[Theorem 5]{107T})\label{FS}
Let $4/3<p\le 2$ and let $q,r \in [2,\infty]$ be such that
\begin{equation*}
\frac{2}{q}+\frac{1}{r}=\frac{1}{p}
\end{equation*}
and 
\begin{equation*}
0<\frac{1}{q}<\min \left( \frac{1}{2}-\frac{1}{r}, \frac{1}{4} \right).
\end{equation*}
Then 
\begin{equation}
\|e^{it\pa_x^2} \phi \|_{L^q (\R ; L^r(\R)} \le C\|\hat{\phi} \|_{L^{p'}(\R)} \label{FSI}
\end{equation}

\end{prop}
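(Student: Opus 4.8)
This is \cite[Theorem 5]{107T}; we only outline the strategy. Write $g=\hat\phi$ and
\begin{equation*}
Sg(t,x)\triangleq e^{it\pa_x^2}\phi(x)=\frac1{2\pi}\int_{\R}e^{ix\xi-it\xi^2}g(\xi)\,d\xi,
\end{equation*}
so that (\ref{FSI}) is the Fourier extension estimate $\|Sg\|_{L^q(\R;L^r(\R))}\le C\|g\|_{L^{p'}(\R)}$ attached to the parabola $\{(\xi,\xi^2):\xi\in\R\}\subset\R^2$. Under the parabolic dilation $g(\xi)\mapsto g(\lam\xi)$ the left-hand side scales like $\lam^{2/q+1/r-1}$ and the right-hand side like $\lam^{-1/p'}$; thus the hypothesis $2/q+1/r=1/p$ is exactly the scaling identity, the inequality is dilation invariant, and the proof must be run at this critical scaling.

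The first step is the diagonal case. If $q=r$, the scaling relation forces $q=r=3p$ and the inequality reads $\|Sg\|_{L^{3p}(\R^2)}\le C\|g\|_{L^{p'}(\R)}$, i.e.\ the adjoint Fourier restriction estimate for the parabola with respect to $d\xi$. Since the parabola has non-vanishing curvature this is the two-dimensional Fourier restriction theorem, which holds in exactly the range $3p>4$, i.e.\ $p>4/3$, and fails at $p=4/3$ (the exponent $q=4$). Its proof, via C\'ordoba's square-function estimate or a bilinear/transversality argument, is the only place where curvature enters essentially, and it already absorbs the delicate summation over the dyadic pieces of the parabola.

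The general case then follows by complex interpolation. The classical $L^2$-based Strichartz estimate gives $\|Sg\|_{L^{q_2}(L^{r_2})}\le C\|g\|_{L^2}$ for admissible pairs $2/q_2+1/r_2=1/2$ (which may be chosen away from the endpoint $q_2=4$), and the first step gives $\|Sg\|_{L^{3p_0}(\R^2)}\le C\|g\|_{L^{p_0'}}$ for every $p_0\in(4/3,p]$. Interpolating these two linear estimates with parameter $\theta$ yields $\|Sg\|_{L^q(L^r)}\le C\|g\|_{L^{p'}}$ with
\begin{equation*}
\frac1{p'}=\frac{1-\theta}2+\frac{\theta}{p_0'},\qquad \frac1q=\frac{1-\theta}{q_2}+\frac{\theta}{3p_0},\qquad \frac1r=\frac{1-\theta}{r_2}+\frac{\theta}{3p_0},
\end{equation*}
and one checks at once that these relations force $2/q+1/r=1/p$. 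Conversely, given $(p,q,r)$ as in the statement, an elementary (if slightly tedious) computation shows that $\theta$, $p_0\in(4/3,p]$ and an admissible pair $(q_2,r_2)$ can be chosen exactly when $p>4/3$, $1/q<1/4$ and $1/q+1/r<1/2$: the condition $p>4/3$ keeps $p_0$ in its range, while $1/q<1/4$ and $1/q+1/r<1/2$ keep $r_2$ (hence $q_2$) in the admissible range. This gives the estimate for the whole stated range of $(p,q,r)$.

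The step I expect to be the real obstacle is the first one — the two-dimensional restriction theorem and the dyadic summation it hides; once that is in hand, the remainder is exponent bookkeeping, and it is exactly this bookkeeping that pins down the hypotheses on $(p,q,r)$. Complete details are given in \cite[Theorem 5]{107T}.
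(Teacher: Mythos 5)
Proposition~\ref{FS} is not proved in this paper at all: it is quoted verbatim from \cite[Theorem 5]{107T}, so there is no internal argument to compare against. Your outline is accurate and matches the standard route behind the cited result — the scaling relation $2/q+1/r=1/p$, the Fefferman--Stein/Zygmund diagonal case $q=r=3p$ (valid exactly for $p>4/3$, failing at $q=4$), and complex interpolation of the extension operator with the $L^2$ Strichartz estimates, whose exponent bookkeeping (taking $p_0$ close to $4/3$) does cover the full open range $1/p-1/2<1/q<1/4$ stated in the proposition.
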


\bigskip

The diagonal case $q=r=3p$ of (\ref{FSI}) goes back to Fefferman and Stein \cite{Fefferman} and is used to obtain the 
well-posedness results for (\ref{NLS}) in \cite{Grunrock,CVV,107T}.  In this paper we essentially use (\ref{FSI}) to
deal with the nonlinear term.  Indeed, the following equivalent form of Proposition \ref{FS} is useful in the estimates of the Duhamel term
of the integral equation.

\begin{cor} (\cite[Proposition 2.6]{107jfa})\label{DFS}
Let $4/3<p \le 2$ and let $\gamma,\sigma$ be such that $(q,r):=(\gamma', \sigma')$ satisfy the assumption of Proposition \ref{FS}.  Then
\begin{equation*}
\left\| \int^t_0 e^{i(t-\tau)\pa_x^2} F(\tau) d\tau \right\|_{\mathfrak{L}^{\infty} ([0,T] ; L^p)}
\le C\| \tau^{\frac{1}{p}-\frac{1}{2}} F(\tau)\|_{L^{\sigma} ([0,T] ; L^{\rho} (\R))}.
\end{equation*}

In particular, for any $s \in \R$ the estimate
\begin{equation}
\left\| \int^t_0 e^{i(t-\tau)\pa_x^2} F(\tau) d\tau \right\|_{\mathfrak{L}^{\infty} ([0,T] ; H^s_p)}
\le C\| \tau^{\frac{1}{p}-\frac{1}{2}} F(\tau) \|_{L^{\sigma} ([0,T] ; H^s_{\rho} (\R))} \label{DFSest}
\end{equation}
holds true.
\end{cor}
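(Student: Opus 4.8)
The plan is to deduce Corollary \ref{DFS} from the Fefferman--Stein estimate of Proposition \ref{FS} by duality. Write $(q,r)=(\gamma',\sigma')$ for the Strichartz pair of Proposition \ref{FS}, so that the right-hand side of (\ref{DFSest}) is a norm of the type $\|\tau^{1/p-1/2}F(\tau)\|_{L^{\gamma}([0,T];H^s_{\sigma})}$. I would first strip off the twist and the derivative. By the group law, $e^{-it\pa_x^2}\int_0^t e^{i(t-\tau)\pa_x^2}F(\tau)\,d\tau=\int_0^t e^{-i\tau\pa_x^2}F(\tau)\,d\tau$, so the left-hand side of (\ref{DFSest}) equals $\sup_{0\le t\le T}\big\|\int_0^t e^{-i\tau\pa_x^2}F(\tau)\,d\tau\big\|_{H^s_p}$. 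Since $\langle D\rangle^s$ is a Fourier multiplier it commutes with $e^{-i\tau\pa_x^2}$ and with the $\tau$-integral, so replacing $F$ by $\langle D\rangle^s F$ reduces the claim to the case $s=0$, namely
\[
\sup_{0\le t\le T}\Big\|\int_0^t e^{-i\tau\pa_x^2}G(\tau)\,d\tau\Big\|_{L^p(\R)}\le C_T\,\big\|\,\tau^{1/p-1/2}G(\tau)\,\big\|_{L^{\gamma}([0,T];L^{\sigma}(\R))} .
\]

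Next I would dualize. For fixed $t\in[0,T]$, after interchanging the $\tau$- and $x$-integrations and using $(e^{-i\tau\pa_x^2})^{*}=e^{i\tau\pa_x^2}$,
\[
\Big\|\int_0^t e^{-i\tau\pa_x^2}G(\tau)\,d\tau\Big\|_{L^p}=\sup_{\|g\|_{L^{p'}}\le1}\ \Big|\int_0^t\big\langle G(\tau),\,e^{i\tau\pa_x^2}g\big\rangle\,d\tau\Big| .
\]
Bounding $\big|\int_0^t(\cdots)\,d\tau\big|\le\int_0^T\big|\langle G(\tau),e^{i\tau\pa_x^2}g\rangle\big|\,d\tau$ already makes the estimate uniform in $t$, so -- unlike for the usual retarded Strichartz estimates -- no Christ--Kiselev-type lemma is needed here. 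Applying H\"older in $x$ with the pair $(\sigma,\sigma')$, then the splitting $1=\tau^{1/p-1/2}\cdot\tau^{-(1/p-1/2)}$ followed by H\"older in $\tau$ with $(\gamma,\gamma')$, bounds the right-hand side by $\|\tau^{1/p-1/2}G(\tau)\|_{L^{\gamma}_\tau L^{\sigma}_x}\cdot\|\tau^{-(1/p-1/2)}e^{i\tau\pa_x^2}g\|_{L^{\gamma'}([0,T];L^{\sigma'})}$. Thus the whole corollary reduces to the single weighted estimate
\[
\big\|\,\tau^{-(1/p-1/2)}e^{i\tau\pa_x^2}g\,\big\|_{L^{q}([0,T];L^{r}(\R))}\le C_T\,\|g\|_{L^{p'}(\R)},\qquad (q,r)=(\gamma',\sigma') .
\]

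Finally I would prove this weighted estimate, which is where the real content lies. Using the factorization of the free evolution as a Fourier transform, $e^{i\tau\pa_x^2}g(x)=c\,\tau^{-1/2}e^{ix^2/4\tau}\,\widehat{\big(e^{i(\cdot)^2/4\tau}g\big)}(x/2\tau)$, a change of variables gives $\|e^{i\tau\pa_x^2}g\|_{L^r_x}=c\,\tau^{1/r-1/2}\|\widehat{\phi_\tau}\|_{L^r}$ with $\phi_\tau:=e^{i(\cdot)^2/4\tau}g$ and $\|\phi_\tau\|_{L^{p'}}=\|g\|_{L^{p'}}$; the weight $\tau^{-(1/p-1/2)}$ then combines with $\tau^{1/r-1/2}$ into the single power $\tau^{1/r-1/p}$, and the defining relation $\tfrac2q+\tfrac1r=\tfrac1p$ of the Fefferman--Stein pair is precisely what makes the resulting $\tau$-integral scale correctly, so that Proposition \ref{FS} (equivalently, its adjoint form) closes the estimate; the strict inequalities $\tfrac1q+\tfrac1r<\tfrac12$ and $\tfrac1q<\tfrac14$ are what guarantee the weighted norm is finite, in particular that $\tau^{1/r-1/p}$ is $L^q$-integrable near $\tau=0$. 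This weighted adjoint Fefferman--Stein bound is the step I expect to be the main obstacle; granting it, Corollary \ref{DFS}, including the $H^s_p$ version, follows from the reductions above.
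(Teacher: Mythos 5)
Your reductions are sound and follow the natural route to this estimate (which the paper itself does not prove but quotes from \cite[Proposition 2.6]{107jfa}): untwisting by the group law, commuting $\langle D\rangle^s$ to reduce to $s=0$, dualizing against $g\in L^{p'}$, and applying H\"older in $x$ and then in $\tau$ after splitting $1=\tau^{1/p-1/2}\cdot\tau^{-(1/p-1/2)}$; you are also right that no Christ--Kiselev-type lemma is needed since the bound becomes uniform in $t$ after enlarging the integration to $[0,T]$. (The exponent labels in the corollary as printed are internally inconsistent, and your normalization --- right-hand side measured in the dual pair of a Fefferman--Stein admissible pair --- is the intended reading, so that is not an issue.) Everything therefore hinges on the weighted bound $\|\tau^{-(1/p-1/2)}e^{i\tau\partial_x^2}g\|_{L^q([0,T];L^r)}\lesssim\|g\|_{L^{p'}}$ with $(q,r)$ as in Proposition \ref{FS}, which you correctly identify as the crux but do not actually prove, and the heuristic you offer for it is wrong: with the factorization $\|e^{i\tau\partial_x^2}g\|_{L^r}=c\,\tau^{1/r-1/2}\|\widehat{\phi_\tau}\|_{L^r}$, $\phi_\tau=e^{i(\cdot)^2/4\tau}g$, the combined weight is $\tau^{1/r-1/p}=\tau^{-2/q}$ by the scaling relation $2/q+1/r=1/p$, and $\tau^{-2/q}$ is \emph{never} $L^q$ near $\tau=0$ (its $q$-th power is $\tau^{-2}$); moreover no pointwise-in-$\tau$ bound of $\|\widehat{\phi_\tau}\|_{L^r}$ by $\|g\|_{L^{p'}}$ is available, since Hausdorff--Young gives the wrong exponent and $p'\ge2$.

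The correct mechanism is that the chirped Fourier transform is itself a Schr\"odinger evolution of $\hat g$: up to unimodular constants $\widehat{\phi_\tau}=c\,e^{-\frac{i}{4\tau}\partial_x^2}\hat g$, so that, substituting $u=1/(4\tau)$ (which turns $\tau^{-2}\,d\tau$ into $4\,du$) and using conjugation symmetry,
\begin{equation*}
\int_0^T\tau^{-2}\bigl\|\widehat{\phi_\tau}\bigr\|_{L^r}^q\,d\tau
\;\lesssim\;\int_{1/(4T)}^{\infty}\bigl\|e^{-iu\partial_x^2}\hat g\bigr\|_{L^r}^q\,du
\;\le\;\bigl\|e^{iu\partial_x^2}\overline{\hat g}\bigr\|_{L^q(\R;L^r)}^q
\;\lesssim\;\|g\|_{L^{p'}}^q,
\end{equation*}
the last step being Proposition \ref{FS} applied to $\overline{\hat g}$, whose Fourier transform is $2\pi\bar g\in L^{p'}$; note the resulting constant is independent of $T$, as the corollary requires. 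In other words, the weight is not integrated away near $\tau=0$; it is absorbed entirely by the change of variables, and the full global-in-time Fefferman--Stein norm appears. This identification also repairs a point you passed over in the duality step: it shows $e^{i\tau\partial_x^2}g\in L^r$ for a.e.\ $\tau$ even though $p'>2$, so the spatial H\"older pairing is legitimate. With this substitution argument supplied, your proof closes and is, in substance, the argument behind the quoted result of \cite{107jfa}.
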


\subsection{ Proof of Proposition \ref{LWP}}\,\, We first introduce several particular exponents.  We set
\begin{equation*}
\frac{1}{q}=\frac{1}{2p}-\frac{1}{8}-\frac{s}{4},\,\,\,\frac{1}{r}=\frac{1+2s}{4},\,\,\,\frac{1}{\rho}=\frac{3}{4}-\frac{s}{2}\left(=\frac{3}{r}-2s\right),\,\,\, \frac{1}{\gamma}=\frac{5}{8}+\frac{s}{4}+\frac{1}{2p},\,\,\, \frac{1}{\sigma}=\frac{9}{8}-\frac{1}{2p}+\frac{s}{4}\left(=\frac{1}{q'}\right).
\end{equation*}

Then it is easy to check that $(q,r,\gamma,\sigma)$, $(q,r)$, and $(\rho,\sigma)$ satisfy the assmption of Proposition \ref{inhomostr}, Proposition \ref{PStr}, and Corollary \ref{DFS} if
$4/3<p\le 2$ and $0<s<3/2-2/p$.  Let $T>0$.  We claim that
\begin{equation}
\left\|\int^t_0 e^{i(t-\tau)\pa_x^2} (u_1 u_2 \overline{u_3}) d\tau  \right\|_{L^q ([0,T] ; H^s_r(\R) )}\le 
CT^{1+s-\frac{1}{p}} \prod_{j=1}^3 \| u_j \|_{L^q([0,T];H^s_r(\R))  }, \label{Snonlinearest1}
\end{equation}
and
\begin{equation}
\left\|\int^t_0 e^{i(t-\tau)\pa_x^2} (u_1 u_2 \overline{u_3}) d\tau  \right\|_{\mathfrak{L}^{\infty} ([0,T] ; H^s_p(\R))}\le 
CT^{1+s-\frac{1}{p}} \prod_{j=1}^3 \| u_j \|_{L^q([0,T];H^s_r(\R))  }, \label{Snonlinearest2}
\end{equation}
for any $u_j \in L^q([0,T];H^s_r(\R)),\,\,j=1,2,3$, where $C$ is independent of $T$.  We first prove (\ref{Snonlinearest1}).  Applying (\ref{inhomostrest}) with  $F=\langle D \rangle^s(u_1u_2 u_3)$, we have
\begin{eqnarray*}
\left\|\int^t_0 e^{i(t-\tau)\pa_x^2} (u_1 u_2 \overline{u_3}) d\tau  \right\|_{L^{q} ([0,T] ; H^s_r)}\le 
C \| u_1 u_2 \overline{u_3} \|_{ L^{\gamma} ([0,T] ; H^s_{\rho} (\R)) }.
\end{eqnarray*}
We estimate the right hand side.  By the fractional Leibniz rule (see e.g. \cite{GO})
\begin{equation*}
\|u_1 u_2 \overline{u_3} \|_{H^s_{\rho}(\R)}
\le C\left( \|u_1\|_{H_r^s(\R)} \|u_2 \overline{u_3} \|_{L^{\kappa}(\R)} +\|u_1\|_{L^{2\kappa}(\R)} \|u_2 \overline{u_3}
\|_{H^s(\R)} \right),
\end{equation*}
where
\begin{equation*}
\frac{1}{\kappa}=\frac{1}{\rho}-\frac{1}{r}=\frac{1}{2}-s.
\end{equation*}
For the first term in the right hand side we have by Schwartz's inequality and Sobolev's embedding, we have
\begin{equation*}
\|u_1\|_{H_r^s(\R)} \|u_2 \overline{u_3} \|_{L^{\kappa}(\R)} 
\le \|u_1\|_{H_r^s(\R)} \|u_2 \|_{L^{2\kappa}(\R)} \|u_3 \|_{L^{2\kappa} (\R)}
\le C \prod_{j=1}^3 \|u_j\|_{H_r^s(\R)}.
\end{equation*}
Noting the relation
\begin{equation*}
\frac{1}{2}=\left(\frac{1}{4}-\frac{s}{2} \right)+\left(\frac{1}{4}+\frac{s}{2} \right)=\frac{1}{2\kappa}+\frac{1}{r},
\end{equation*}
the second term can be estimated as
\begin{equation*}
\|u_1\|_{L^{2\kappa}(\R)} \|u_2 \overline{u_3}
\|_{H^s(\R)} \le C\|u_1 \|_{L^{2\kappa}(\R)} \left( \|u_2 \|_{H_r^s(\R)} \|u_3\|_{L^{2\kappa}(\R)} 
+ \|u_2 \|_{L^{2\kappa}(\R)} \|\overline{u_3} \|_{H^s_r (\R)}\right),
\end{equation*}
the right hand side of which can be controlled by $\|u_1\|_{H_r^s} \|u_2\|_{H_r^s}\|u_3\|_{H_r^s}$ by Sobolev's embedding.  Consequently, we have
\begin{equation*}
\| u_1 u_2 \overline{u_3} \|_{ L^{\gamma} ([0,T] ; H^s_{\rho} (\R)) }
\le C\left\| \prod_{j=1}^3 \|u_j\|_{H_r^s(\R)} \right\|_{L^{\gamma}([0,T])}
\le CT^{1+s-\frac{1}{p}} \prod_{j=1}^3 \| u_j \|_{L^q([0,T];H^s_r(\R))  }
\end{equation*}
by H\"older's inequality in the time variable.  This concludes the proof of (\ref{Snonlinearest1}).  To prove (\ref{Snonlinearest2})
we use (\ref{DFSest}) to obtain
\begin{eqnarray*}
\left\|\int^t_0 e^{i(t-\tau)\pa_x^2} (u_1 u_2 \overline{u_3}) d\tau  \right\|_{\mathfrak{L}^{\infty} ([0,T] ; H^s_p)}&\le & C \| \tau^{\frac{1}{p}-\frac{1}{2}} u_1(\tau) u_2(\tau) \overline{u_3(\tau)} \|_{ L^{\sigma} ([0,T] ; H^s_{\rho} (\R)) }.
\end{eqnarray*}
As seen in the proof of (\ref{Snonlinearest1}), the norm $\| \tau^{\frac{1}{p}-\frac{1}{2}} u_1(\tau) u_2(\tau) \overline{u_3(\tau)} \|_{ H^s_{\rho} }$ can be controlled by $C\tau^{\frac{1}{p}-\frac{1}{2}}\|u_1(\tau) \|_{H_r^s} \|u_2 (\tau )\|_{H_r^s}\|u_3 (\tau )\|_{H_r^s}$.  Thus the right hand side of the above 
inequality is bounded from above by
\begin{eqnarray*}
&&C \left\| \tau^{\frac{1}{p}-\frac{1}{2}} \prod_{j=1}^3 \| u_j \|_{L^q([0,T];H^s_r(\R))} \right\|_{ L^{\sigma}([0,T]) } \\
&\le & C\| \tau^{\frac{1}{p}-\frac{1}{2}} \|_{L^{\frac{\rho q}{q-3\rho}} ([0,T])}
\prod_{j=1}^3 \| u_j \|_{L^q([0,T];H^s_r(\R))  } \\
&\le & CT^{1+s-\frac{1}{p}} \prod_{j=1}^3 \| u_j \|_{L^q([0,T];H^s_r(\R))  },
\end{eqnarray*}
where we have used H\"older's inequality in the time variable.  This proves (\ref{Snonlinearest2}).

Now we establish a local solution of the corresponding integral equation
\begin{equation*}
u(t)=e^{it\pa_x^2}\phi +i\int^t_0 e^{i(t-\tau)\pa_x^2} (|u|^2u) d\tau.
\end{equation*}
We find a fixed point of the operator
\begin{equation*}
Su \triangleq e^{it\pa_x^2}\phi +i\int^t_0 e^{i(t-\tau)\pa_x^2} (|u|^2u) d\tau
\end{equation*}
in a suitable closed subset of $\mathfrak{L}^{\infty} ([0,T] ; H^s_p) \cap L^q([0,T];H^s_r)$.  For $T,R>0$ we define
\begin{equation*}
\mathcal{V}_{T,R} \triangleq \{ u \in \mathfrak{L}^{\infty} ([0,T] ; H^s_p) \cap L^q([0,T];H^s_r)\,\,
|\, \|u\|_{ \mathfrak{L}^{\infty} ([0,T] ; H^s_p) } +\|u\|_{L^q([0,T];H^s_r)} \le R\,\}
\end{equation*}
equipped with the distance
\begin{equation*}
d(u,v) \triangleq \|u-v\|_{ \mathfrak{L}^{\infty} ([0,T] ; H^s_p) } +\|u-v\|_{L^q([0,T];H^s_r)} .
\end{equation*}
By (\ref{HPstr}), (\ref{Snonlinearest1}), (\ref{Snonlinearest2}) we have for any $u \in \mathcal{V}_{T,R}$
\begin{eqnarray*}
\| Su \|_{\mathfrak{L}^{\infty} ([0,T] ; H^s_p)  }  +\|Su \|_{ L^q([0,T];H^s_r }& \le & C\left( \|\phi \|_{H_p^s} +T^{1+s-\frac{1}{p}} \|u \|_{ L^q([0,T];H^s_r )}^3  \right)\\
&\le &  C\left( \|\phi \|_{H_p^s} +T^{1+s-\frac{1}{p}} R^3  \right).
\end{eqnarray*}
Now we take $T,R$ so that
\begin{equation*}
R=2C\|\phi \|_{H^s_p},\qquad T^{1+s-\frac{1}{p}} \le \frac{R^{-2}}{4C}.
\end{equation*}
Then the right hand side is smaller than $R/2+R/4 \le R$ and thus $S:\mathcal{V}_{T,R} \to \mathcal{V}_{T,R}$ is well defined.  Similarly, for $u,v\in \mathcal{V}_{T,R}$ 
\begin{eqnarray*}
d(Su,Sv) &\le & CT^{1+s-\frac{1}{p}} ( \| u\|^2_{ L^q([0,T];H^s_r )} +\| u\|_{ L^q([0,T];H^s_r) }\| v\|_{ L^q([0,T];H^s_r) }+ \| v\|^2_{ L^q([0,T];H^s_r) } )\\
 && \times \| u-v\|_{ L^q([0,T];H^s_r) }    \\
&\le & C \times \frac{R^{-2}}{4C} \times 3R^2 \times \| u-v\|_{ L^q([0,T];H^s_r) }  \\
&\le & \frac{3}{4}d(u,v),
\end{eqnarray*}
from which we see that  $S:\mathcal{V}_{T,R} \to \mathcal{V}_{T,R}$ is a contraction mapping.  
Consequently, a local solution $v\in \mathfrak{L}^{\infty} ([0,T] ; H^s_p)\cap  L^q([0,T];H^s_r )$ with 
$T\sim \|\phi\|_{H^s_p}^{-\frac{2}{1+s-\frac{1}{p}}}$ can be constructed
by the standard fixed point theorem.  Finally a difference estimate similar to the proof of Theorem \ref{ULWP}
give the uniqueness of the solution in the space $L^q([0,T] ;H_r^s)$ and the continuous dependence on data.


\begin{thebibliography}{99}
\bibitem{BL} J. Bergh and J. L\"ofstr\"om, {\em Interpolation Spaces: An
	Introduction,} Springer-Verlag, 1976.

\bibitem{Brenner} P. Brenner, V. Thom\'ee and L.B. Wahlbin, {\em Besov Spaces and Applications to Difference Methods for Initial Value Problems}, Lecture Notes in Math. Springer 434.
\bibitem{Caz} T. Cazenave, {\em Semilinear  Schr\"odinger equations}, Courant Lect. 
Notes Math. {\bf 10}, New York Univ., Courant Inst. Math. Sci., New York, 2003.
\bibitem{CVV} T. Cazenave, L. Vega, and M.C.Vilela, {\em A note on the nonlinear Schr\"odinger equation in weak $L^p$ spaces}, Communications in contemporary Mathematics, Vol. 3, No.1 (2001),153--162.


\bibitem{Fefferman} C. Fefferman, {\em Inequalities for strongly singular convolution operators,} Acta math. 124 (1970), 9-36.

\bibitem{Grunrock} A. Gr\"unrock, {\em Bi- and trilinear Schr\"odinger estimates in one space dimension with applications to cubic NLS and DNLS  },
Int. Math. Res. Not., {\bf 41} (2005) 2525-2558.



\bibitem{Hormander} L. H\"ormander, {\em Estimates for translation invariant operators in $L^p$ spaces,} Acta Math, {\bf 104} (1960), 141--164.


\bibitem{GO} L. Grafakos and S. Oh,  {\em The Kato Ponce inequality}, Commun. Partial Differ. Equ. {\bf 39}, (2014), 1128--1157.

\bibitem{107jfa} R. Hyakuna, {\em Local and global well-posedness, and $L^{p'}$-decay estimates for 1D nonlinear Schr\"odinger equations with Cauchy data in $L^p$}, J. Funct. Anal. {\bf 278}, No. 12, (2020), 108511.
\bibitem{107T} R. Hyakuna and M. Tsutsumi, {\em On existence of global solutions of Schr\"odinger equations with subcritical nonlinearity for $\widehat{L^p}$-data}, Proc. Am, Math. Soc. {\bf 140} (2012), 3905--3920.



\bibitem{KatoHs} T. Kato, {\em On nonlinear Schr\"odinger equations.  II.  $H^s$-solutions and unconditional well-posedness}, J. Anal. Math., {\bf 67} (1995), 281--306.
\bibitem{Kato} T. Kato, {\em An $L^{q,r}$-theory for nonlinear Schr\"odinger equations}, Adv. Stud. Pure Math., vol. 23, Math. Soc. Japan, Tokyo, 1994, 223-238.

\bibitem{Kishimoto} N. Kishimoto, {\em Unconditional uniqueness of solutions for nonlinear dispersive equations}, https://arxiv.org/pdf/1911.04349.

\bibitem{OW} T. Oh and Y. Wang, {\em  Normal form approach to the one-dimensional periodic cubic nonlinear Schr\"odinger equation in almost critical Fourier-Lebesgue spaces }, 
J. Anal. Math., {\bf 143} (2021), 723--762.




\bibitem{Zhou} Y. Zhou, {\em  Cauchy problem of nonlinear Schr\"odinger equation with initial data in Sobolev spce $W^{s,p}$ for $p<2$                     }, 
Trans. Amer. Math. Soc., {\bf 362} (2010), 4683-4694.
\end{thebibliography}
\end{document}